\newtheorem{theorem}{Theorem}[section]
\newtheorem{proposition}[theorem]{Proposition}
\newtheorem{lemma}[theorem]{Lemma}
\newcommand{\Tr}{\operatorname{Tr}}
\theoremstyle{definition}
\begin{document}

\title[Characterization of operator monotone functions]{On characterization of operator monotone functions}

\author[D. T. Hoa]{Dinh Trung Hoa$^b$} 

\address{Division of Computational Mathematics and Engineering (CME), Institute for Computational Science (INCOS), Ton Duc Thang University, Ho Chi Minh City, Vietnam; \\ Faculty of Civil Engineering, Ton Duc Thang University, Ho Chi Minh City, Vietnam;  \\ Department of Mathematics and Statistics, Auburn University, Auburn, Alabama, USA. 
}
\email{dinhtrunghoa@tdt.edu.vn; dzt0022@auburn.edu}


\keywords{characterization of operator monotonicity, mean of positive matrices, reverse Cauchy inequality} 
\subjclass[2000]{46L30, 15A45}

\footnote{$^B$This research is funded by Vietnam National Foundation for Science and Technology Development (NAFOSTED) under grant number 101.04-2014.40.}

\begin{abstract}
In this paper, we will show a new characterization of operator monotone functions by a matrix reverse Cauchy inequality.
\end{abstract}
\maketitle



\section{Introduction}
Let $M_n$ be the space of $n\times n$ complex matrices, $M_n^h$ the self-adjoint part of  $M_n$. For $A, B \in M_n^h$, the notation $A \le B$ means that $B - A \in  M_n^+$. The spectrum of a matrix $A\in M_n$ is denoted by $\sigma(A)$. For a real-valued function $f$ of a real variable and a matrix $A \in M_n^h$, the value $f(A)$ is understood by means of the functional calculus for Hermitian matrices.

Taking an axiomatic approach,  Kubo and Ando introduced the notions of  connection and mean. A binary operation $\sigma$ defined on the set of positive definite matrices is called a \textit{connection} if

(i) $A\leq C,B\leq D$ imply $A \sigma B \leq B\sigma D;$

(ii) $C^*(A\sigma B)C\leq (C^*AC)\sigma(C^*BC);$

(iii) $A_{n}\downarrow A$ and $B_{n}\downarrow B$ imply $A_{n}\sigma B_{n}\downarrow A\sigma B$.\\
If $I\sigma I=I,$ then $\sigma$ is called a \textit{mean}.

For $A, B>0$, the \textit{geometric mean} $A\sharp B$ is defined by
\begin{eqnarray*}
 A\sharp B &=& A^{1/2}(A^{-1/2}BA^{-1/2})^{1/2}A^{1/2}.
 \end{eqnarray*}
The \textit{harmonic} $A!B$ and \textit{arithmetic} $A\nabla B$ means are defined
$A!B = 2(A^{-1}+B^{-1})^{-1}$ and $A\nabla B=\displaystyle\frac{A+B}{2}$, respectively.  A mean $\sigma$ is called to be symmetric if $A\sigma B = B\sigma A$ for any pair of positive definite matrices $A, B$.

It is well-known that the arithmetic mean $\nabla$ is the biggest among symmetric means. From the general theory of symmetric matrix means we know that $\nabla\geq \sigma$ and $\tau\geq !$.

For positive real numbers $a, b$, the arithmetic-geometric mean inequality (AGM) says that
$$
\sqrt{ab} \le \frac{a+b}{2}.
$$
Hences, for a monotone increasing function $f$ on $[0, \infty)$, we have
\begin{equation}\label{cauchy}
f(\sqrt{ab}) \le f(\frac{a+b}{2}).
\end{equation}
It is natural to ask that if inequality (\ref{cauchy}) holds for any pair of positive number $a, b$ will the function $f$ be monotone increasing on  $[0, \infty)$? The answer is positive, and follows from the elementary fact that for any positive numbers $a \le b$ there exist positive number $x, y$ such that $a$ is arithmetic mean and $b$ is geometric mean of $x, y$.

The matrix version of above fact was investigated by Prof. T.Ando and Prof. F.Hiai \cite{Ando-Hiai}. They showed that the Cauchy inequality characterizes operator monotone functions, that means, if the following inequality holds
\begin{equation}\label{Hiai-ando}
f(A\nabla B) \ge f(A\sharp B)
\end{equation} 
whenever positive definite matrices $A, B$, then the function $f$ is an operator monotone.

An interesting and useful reverse Cauchy inequality is as follows: for any positive number $x, y$,
\begin{equation}\label{reverse cauchy-scalar1}
\frac{x+y}{2} \le \sqrt{xy}+ \frac{1}{2}|x-y|, 
\end{equation}
or 
$$
\min\{x, y\} \le \sqrt{xy}.
$$
The matrix case of (\ref{reverse cauchy-scalar1}) is the Powers-St\o rmer inequality which gives an upper bound of quantum Chernoff bound in quantum hypothesis testing theory \cite{K. M. R. Audenaert}. A generalized version of the Powers-St\o rmer inequality was studied by the author and H.Osaka and Ho M. Toan \cite{HOT}.

We have a scalar characterization of monotone functions as follows.

\begin{proposition}\label{Prop1} A function $f$ on $[0, \infty)$ is monotone increasing if and only if the following inequality  
\begin{equation}\label{reverse cauchy-scalar}
f(\frac{x+y}{2}) \le f(\sqrt{xy}+ \frac{1}{2}|x-y|) 
\end{equation}
holds for any pair of positive numbers $x, y$. 
\end{proposition}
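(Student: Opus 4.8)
The plan is to prove both implications, handling the forward direction as essentially immediate and concentrating the work on the converse. Suppose first that $f$ is monotone increasing. The reverse Cauchy inequality (\ref{reverse cauchy-scalar1}) already asserts that $\frac{x+y}{2}\le\sqrt{xy}+\frac12|x-y|$ for all positive $x,y$, so since both sides lie in $[0,\infty)$ and $f$ preserves order, applying $f$ gives (\ref{reverse cauchy-scalar}) at once. If a self-contained verification of (\ref{reverse cauchy-scalar1}) is wanted, assuming $x\ge y$ the claim reduces after cancellation to $y\le\sqrt{xy}$, i.e.\ $\sqrt y\le\sqrt x$.

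For the converse, assume (\ref{reverse cauchy-scalar}) holds for every pair of positive numbers and fix $0<a\le b$; the goal is $f(a)\le f(b)$. The natural idea is to realize $a$ as the arithmetic mean $\frac{x+y}{2}$ and $b$ as $\sqrt{xy}+\frac12|x-y|$ of a suitable pair. Writing $x=a+t$ and $y=a-t$ with $0\le t<a$, the arithmetic mean is exactly $a$ and the right-hand quantity becomes
\begin{equation*}
g(t):=\sqrt{a^2-t^2}+t.
\end{equation*}
I would then determine the range of $g$ on $[0,a)$: one checks that $g(0)=a$, that $g$ rises to a maximum at $t=a/\sqrt2$, and that this maximum equals $g(a/\sqrt2)=a\sqrt2$. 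Hence, for fixed $a$, the attainable values of $b$ fill exactly the interval $[a,a\sqrt2]$, and solving $g(t)=b$ produces explicit positive $x,y$ (with $y=a-t>0$ since $t<a$). Inserting such a pair into (\ref{reverse cauchy-scalar}) yields $f(a)\le f(b)$ for every $b$ with $a\le b\le a\sqrt2$.

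The main obstacle is precisely this factor $\sqrt2$: a single use of the hypothesis cannot compare $f(a)$ with $f(b)$ once $b>a\sqrt2$. I would remove it by iteration. Given arbitrary $0<a\le b$, pick an integer $N$ with $(b/a)^{1/N}\le\sqrt2$ and set $a_i=a\,(b/a)^{i/N}$ for $i=0,\dots,N$, so that $a_0=a$, $a_N=b$, and each consecutive ratio $a_{i+1}/a_i\le\sqrt2$. Applying the previous step to each pair $(a_i,a_{i+1})$ gives $f(a_i)\le f(a_{i+1})$, and chaining these inequalities produces $f(a)\le f(b)$. Strictly, this argument yields monotonicity only on $(0,\infty)$, since the hypothesis never involves the value $f(0)$ (the arithmetic mean of positive numbers is positive); to include the endpoint one assumes right-continuity at $0$ — automatic for the continuous functions of interest here — and passes to the limit $a\downarrow0$.
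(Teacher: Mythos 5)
Your proposal is correct and follows essentially the same route as the paper: realize any pair $a\le b\le a\sqrt2$ as $a=\frac{x+y}{2}$ and $b=\sqrt{xy}+\frac12|x-y|$ (your substitution $x=a+t$, $y=a-t$ is just a reparametrization of the paper's quadratic $2x^2+2(b-2a)x+(b-a)^2=0$), then chain through a finite sequence with consecutive ratios at most $\sqrt2$. You are in fact a bit more careful than the paper, spelling out the forward implication and the behaviour at the endpoint $0$, but the underlying argument is identical.
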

\begin{proof}
Firstly, we show that for $0 \le a \le b \le \sqrt{2} a$ there exist positive numbers $x, y$ such that
$$
a = \frac{x+y}{2}, \quad b= \sqrt{xy}+ \frac{1}{2}|x-y|.
$$
We can assume that $x \le a$. It is easy to see that for $0 \le a \le b \le \sqrt{2} a$ the equation 
$$
b= \sqrt{x(2a-x)} + a-x,
$$
or, 
$$
2x^2 + 2(b-2a)x + (b-a)^2 = 0
$$
has a positive solution.
Consequently, if we have (\ref{reverse cauchy-scalar}), then 
$$
 f(a) = f(\frac{x+y}{2}) \le  f(\sqrt{xy}+ \frac{1}{2}|x-y|)  = f(b).
$$
For arbitrary $a\le b$, it is obvious that there exist numbers $a_1, a_2, \cdots, a_m$ such that
$$
a= a_0 \le a_1 \le \cdots \le a_m=b \quad \hbox{and} \quad a_i \le a_{i+1} \le \sqrt{2} a_i.
$$
Apply above arguments, we can get 
$$
f(a) \le f(a_1) \le f(a_2) \le \cdots \le f(a_n)= f(b).
$$
\end{proof}

In \cite{HOJ} it was shown a characterization of operator monotonicity by the trace Powers-St\o rmer's inequality in an infinite-dimensional Hilbert space. To continue this topic and partially motivated by above mentioned result of F.Hiai and T.Ando, in this paper we will study the matrix case of Proposition \ref{Prop1}. More precisely, it will be shown that for a nonnegative function $f$ on $[0, \infty)$ if
\begin{equation*}
f(A\nabla B) \le f(A\sharp B + \frac{1}{2} A^{1/2}|I - A^{-1/2}B A^{-1/2}|A^{1/2}) 
\end{equation*}
for any pair of positive definite matrices $A, B$, then $f$ is operator monotone on $[0, \infty).$

\section{Main results}

\begin{theorem}\label{A reverse Cauchy Ineq} 
Let $f$ be an operator monotone function on $[0,\infty)$, and $\sigma_f$ the operator mean (in sense of Kubo-Ando's theory) corresponding to $f$. Then for any pair of positive matrices $A, B$,
\begin{equation}\label{reverse Cauchy}
A \nabla B -A\sigma_f B \le  \frac{1}{2}A^{1/2}|I- A^{-1/2}BA^{-1/2}|A^{1/2},
\end{equation}
where $\nabla$ is the arithmetic mean. 
\end{theorem}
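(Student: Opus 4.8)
The plan is to reduce the matrix inequality \eqref{reverse Cauchy} to a scalar one by combining the Kubo--Ando representation of $\sigma_f$ with a congruence. Since $\sigma_f$ is a mean we have $f(1)=1$, and by Kubo--Ando theory the (normalized) operator monotone function $f$ represents $\sigma_f$ via
$$
A\sigma_f B = A^{1/2} f\bigl(A^{-1/2}BA^{-1/2}\bigr) A^{1/2}.
$$
Writing $T = A^{-1/2}BA^{-1/2} > 0$, a direct computation gives $A\nabla B = A^{1/2}\tfrac{I+T}{2}A^{1/2}$ and $\tfrac12 A^{1/2}|I-A^{-1/2}BA^{-1/2}|A^{1/2} = A^{1/2}\bigl(\tfrac12|I-T|\bigr)A^{1/2}$, so every term in \eqref{reverse Cauchy} has the form $A^{1/2}(\cdot)A^{1/2}$.

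First I would apply the congruence $X \mapsto A^{1/2}XA^{1/2}$. As $A^{1/2}$ is invertible, this map preserves and reflects the order $\le$, so \eqref{reverse Cauchy} is equivalent to
$$
\frac{I+T}{2} - f(T) \le \frac12\,|I-T|.
$$
Now $\tfrac{I+T}{2}$, $f(T)$ and $|I-T|$ are all functions of the single self-adjoint matrix $T$; in particular they commute and are simultaneously diagonalized in an eigenbasis of $T$. Hence the displayed matrix inequality is equivalent to the scalar inequality $\tfrac{1+t}{2} - f(t) \le \tfrac12|1-t|$ for every $t\in\sigma(T)\subset(0,\infty)$. Using the elementary identity $\tfrac{1+t}{2} - \tfrac12|1-t| = \min\{1,t\}$, this is precisely
$$
f(t) \ge \min\{1,t\}, \qquad t>0.
$$

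It then remains to verify this last estimate. For $t\ge 1$ it is immediate from monotonicity, since $f(t)\ge f(1)=1=\min\{1,t\}$. For $0<t\le 1$ I would invoke the standard fact that an operator monotone function on $[0,\infty)$ is concave, together with $f(0)\ge 0$: concavity on $[0,1]$ gives $f(t) \ge (1-t)f(0) + tf(1) \ge tf(1) = t = \min\{1,t\}$. I expect the congruence reduction and the simultaneous diagonalization of functions of $T$ to be entirely routine; the only substantive input is the concavity (equivalently, the integral representation) of operator monotone functions, which is exactly the place where the hypothesis on $f$ enters, and so constitutes the main point of the argument.
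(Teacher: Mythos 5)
Your proposal is correct and follows essentially the same route as the paper: both reduce \eqref{reverse Cauchy} by the congruence $X\mapsto A^{1/2}XA^{1/2}$ to the scalar estimate $f(t)\ge\min\{1,t\}$ on the spectrum of $A^{-1/2}BA^{-1/2}$. The only difference is that the paper simply cites $\min\{1,t\}\le f(t)$ as well known, whereas you supply the short proof via $f(1)=1$ and the concavity of operator monotone functions, which is a harmless (and slightly more self-contained) addition.
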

\begin{proof}
Since $f$ is an operator monotone function on $[0, \infty)$, then it is well-known that 
\begin{equation*}\label{scalar2}
\min\{1, t\}  \le f(t) \quad (t \ge 0)
\end{equation*}
or 
\begin{equation*}\label{scalar3}
\min\{a, b\}  \le af(b/a) \quad (a, b > 0)
\end{equation*}

Hence, by the Spectral Theorem, for any positive matrix $Q$,
\begin{equation}\label{2}
I+ Q - |I-Q| \le 2 f(Q).
\end{equation}
For any positive matrices $B, C$, we substitute $Q=C^{*-1}BC^{-1}$ into (\ref{2}), and after multiplying on $C^*$ on the left hand side and $C$ on the right hand side, we get
\begin{equation}
C^* C + B - C^*|I- C^{*-1}BC^{-1}|C \le 2 C^*f(C^{*-1}BC^{-1}) C.
\end{equation}
Since $A$ is positive, we can take $A=C^{1/2}$, then we get
\begin{equation}
A + B - A^{1/2}|I- A^{-1/2}BA^{-1/2}|A^{1/2} \le 2 A\sigma_f B,
\end{equation}
or 
\begin{equation*}
A \nabla B -A\sigma_f B \le  \frac{1}{2}A^{1/2}|I- A^{-1/2}BA^{-1/2}|A^{1/2}.
\end{equation*}
\end{proof}

In \cite{Hoa-Osaka-Khue} the author with H.Osaka and Vo T.B. Khue obtained the following theorem.

\begin{theorem}[\cite{Hoa-Osaka-Khue}]\label{thm:cauchy inequality}
Let $f$ be a strictly positive operator monotone function on $[0, \infty)$ with $f((0, \infty)) \subset (0, \infty)$
and $f(1) = 1$. Then
\begin{equation}\label{HOK}
A \nabla B -A \sigma_f B \le \frac{1}{2}|A - B|
\end{equation}
for any positive semidefinite matrices $A$ and $B$ satisfying the condition
\begin{equation}\label{condition}
AB + BA \geq 0.
\end{equation}
\end{theorem}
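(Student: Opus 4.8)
The plan is to recast \eqref{HOK} as a lower bound for the mean. Since $A\nabla B=\tfrac12(A+B)$, the inequality \eqref{HOK} is equivalent to
\[
A\sigma_f B\ \geq\ M,\qquad M:=\frac{A+B-|A-B|}{2},
\]
where $M$ is the natural matrix analogue of $\min\{a,b\}$. Thus the whole statement reduces to showing that the mean dominates this ``matrix minimum.''

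I would first record two facts about $M$ that require no hypothesis. For any self-adjoint $X$ one has $|X|\geq\pm X$, so
\[
A-M=\tfrac12\big((A-B)+|A-B|\big)\geq0,\qquad B-M=\tfrac12\big(|A-B|-(A-B)\big)\geq0,
\]
that is, $M\leq A$ and $M\leq B$. The role of the hypothesis \eqref{condition} is then exactly to force $M\geq0$: from the identity $(A+B)^2-(A-B)^2=2(AB+BA)$ together with $AB+BA\geq0$ we get $(A+B)^2\geq(A-B)^2$, and since $t\mapsto\sqrt{t}$ is operator monotone this yields $A+B\geq|A-B|$, i.e.\ $M\geq0$. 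Hence, under \eqref{condition}, $M$ is a positive semidefinite matrix with $0\leq M\leq A$ and $0\leq M\leq B$.

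The conclusion is then immediate from the two defining features of a Kubo--Ando mean. Normalization $f(1)=1$ gives $M\sigma_f M=M^{1/2}f(I)M^{1/2}=M$, while the joint monotonicity (property (i)) applied to $M\leq A$ and $M\leq B$ gives $M\sigma_f M\leq A\sigma_f B$. Combining the two, $M\leq A\sigma_f B$, which is precisely \eqref{HOK}.

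The main thing to get right, I expect, is to resist the tempting shortcut through Theorem \ref{A reverse Cauchy Ineq}: that theorem only bounds the left-hand side by $\tfrac12 A^{1/2}|I-A^{-1/2}BA^{-1/2}|A^{1/2}$, and this quantity can strictly exceed $\tfrac12|A-B|$ even when \eqref{condition} holds, so it does not close the gap; the monotonicity argument above is what is actually needed. The remaining points are routine: the identity $M\sigma_f M=M$ and the monotonicity property are first checked for positive definite arguments and then extended to the positive semidefinite cone by the continuity axiom (iii), applying the argument to $A+\varepsilon I,\,B+\varepsilon I$ and letting $\varepsilon\downarrow0$ if necessary. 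Finally, \eqref{condition} is genuinely indispensable: without it $M$ may have a negative eigenvalue, so it is not even an admissible argument of $\sigma_f$, and explicit $2\times2$ examples then violate \eqref{HOK}.
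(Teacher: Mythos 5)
The paper does not actually prove this theorem: it is quoted from the reference \cite{Hoa-Osaka-Khue} with no argument supplied, so there is no in-text proof to compare yours against. Judged on its own, your proof is correct and complete. The reduction of \eqref{HOK} to $A\sigma_f B\ge M$ with $M=\tfrac12\bigl(A+B-|A-B|\bigr)$ is the right move; the unconditional bounds $M\le A$ and $M\le B$ follow from $|X|\ge\pm X$ exactly as you say; the identity $(A+B)^2-(A-B)^2=2(AB+BA)$ plus operator monotonicity of $t\mapsto\sqrt t$ correctly converts \eqref{condition} into $A+B\ge|A-B|$, i.e.\ $M\ge 0$ (using $\sqrt{(A+B)^2}=A+B$ since $A+B\ge0$); and then $M=M\sigma_f M\le A\sigma_f B$ follows from $f(1)=1$ together with the joint monotonicity axiom (i). The $\varepsilon$-regularization you mention is indeed what is needed to make sense of $\sigma_f$ and of $M\sigma_f M=M$ on the semidefinite cone, and your warning that Theorem \ref{A reverse Cauchy Ineq} cannot be used as a shortcut is well taken, since $A^{1/2}\lvert I-A^{-1/2}BA^{-1/2}\rvert A^{1/2}$ and $\lvert A-B\rvert$ are incomparable in general. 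In short, you have supplied a sound, self-contained proof of a statement the paper only cites.
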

We also give a counterexample for inequality (\ref{HOK}) if the condition (\ref{condition}) does not hold. If the mean $\sigma_f$ in Theorem \ref{thm:cauchy inequality} is symmetric, then without the condition (\ref{condition}) inequality (\ref{HOK}) holds for any positive matrices $A, B$ iff $\sigma_f$ is arithmetic. The proof of the following theorem is adapted from \cite{Ando-Hiai}.

\begin{theorem}
Let $\sigma$ be a symmetric mean, and let the following inequality 
\begin{equation}\label{HOK2}
A\nabla B - A\sigma B \le \frac{1}{2}|A-B|
\end{equation}
holds for any pair of positive semidefinite matrices $A, B$. Then $\sigma = \nabla$.
\end{theorem}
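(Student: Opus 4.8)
The plan is to argue by contraposition: I will show that if $\sigma$ is a symmetric mean with $\sigma \neq \nabla$, then the reverse inequality (\ref{HOK2}) fails for a suitable pair of positive definite matrices. Write $f$ for the representing function of $\sigma$, so that $A\sigma B = A^{1/2}f(A^{-1/2}BA^{-1/2})A^{1/2}$, $f(1)=1$, and $f(t)=tf(1/t)$ by symmetry. Since $\nabla$ is the largest symmetric mean, $f(t)\le \frac{1+t}{2}$ for all $t\ge 0$, with equality at every $t$ precisely when $\sigma=\nabla$. Hence if $\sigma\neq\nabla$ there is a point $t_0$, which by the symmetry $f(t)=tf(1/t)$ may be taken in $(0,1)$, with $f(t_0)<\frac{1+t_0}{2}$. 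The whole point will be to manufacture a $2\times 2$ counterexample anchored at this $t_0$.

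For the construction I would take $J=\diag(1,-1)$ and set
\[
A=\begin{pmatrix} m_1 & d\\ d & m_2\end{pmatrix},\qquad B=JAJ=\begin{pmatrix} m_1 & -d\\ -d & m_2\end{pmatrix},
\]
with $m_1,m_2>0$, $d>0$ and $m_1m_2>d^2$ (so $A,B>0$), and $m_1\neq m_2$ (so that $A,B$ do not commute). Two features make this family convenient. First, $A-B$ is off-diagonal, so $|A-B|=2d\,I$ is a scalar multiple of the identity and the right-hand side of (\ref{HOK2}) is simply $d\,I$. Second, since $\sigma$ is symmetric, $J(A\sigma B)J=(JAJ)\sigma(JBJ)=B\sigma A=A\sigma B$, so $A\sigma B$ commutes with $J$ and is therefore diagonal, while $A\nabla B=\diag(m_1,m_2)=:M$. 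Thus (\ref{HOK2}) collapses to the two scalar conditions obtained by comparing diagonal entries.

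To evaluate $A\sigma B$ explicitly I would exploit that on a two-dimensional space any function of $X:=A^{-1/2}BA^{-1/2}$ is affine in $X$, so $f(X)=c_0I+c_1X$ and hence $A\sigma B=c_0A+c_1B$. Computing the generalized eigenvalues from $\det(B-tA)=0$ gives them as $t_1,1/t_1$ with $t_1=\frac{1-\beta}{1+\beta}$, $\beta=d/\sqrt{m_1m_2}$; feeding $t_2=1/t_1$ and the symmetry $f(1/t_1)=f(t_1)/t_1$ into the interpolation formulas forces $c_0=c_1=\frac{f(t_1)}{1+t_1}$. Therefore $A\sigma B=\kappa M$ with $\kappa=\frac{2f(t_1)}{1+t_1}\le 1$, and (\ref{HOK2}) becomes the pair of inequalities $d\ge m_i(1-\kappa)$ for $i=1,2$. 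Now I would fix $\beta$ (equivalently $t_1=t_0$) so that $\kappa=\frac{2f(t_0)}{1+t_0}<1$ is a constant strictly below $1$, set $m_2=1$, $m_1=m$ and $d=\beta\sqrt{m}$, and let $m\to\infty$: the positivity requirement $d^2<m$ is preserved, while $d=\beta\sqrt m$ is eventually smaller than $m(1-\kappa)$, so the $i=1$ inequality fails and (\ref{HOK2}) is violated.

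The genuine obstacle is the middle step, evaluating $A\sigma B$ for non-commuting $A,B$. The two-dimensional linearization together with the reciprocal eigenvalue pair $t_1,1/t_1$ is what rescues the computation, collapsing $A\sigma B$ to the single scalar $\kappa$ times the midpoint $M$; once this identity is in hand the remainder is a scaling argument. It is also worth checking that these matrices lie outside the regime $AB+BA\ge 0$ of Theorem \ref{thm:cauchy inequality}, which is exactly consistent with the inequality being permitted to fail there.
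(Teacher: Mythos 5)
Your proposal is correct, and it takes a genuinely different route from the paper. The paper's proof invokes the Kubo--Ando integral representation of a symmetric mean, $A\sigma B=\frac{\alpha}{2}(A+B)+\int\frac{\lambda+1}{\lambda}\{(\lambda A):B+A:(\lambda B)\}\,d\mu(\lambda)$, evaluates it at a pair of rank-one projections $P,Q$ with $P\wedge Q=0$ (where every parallel-sum term vanishes, so $P\sigma Q=\frac{\alpha}{2}(P+Q)$), and then compares $(1,1)$-entries of $(1-\alpha)(P+Q)\le|P-Q|$ as the angle between the ranges tends to zero, forcing $\alpha\ge 1$ and hence $\mu=0$. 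You instead work with the representing function $f$ and stay entirely inside the positive definite cone: the pair $A$, $B=JAJ$ with $J=\diag(1,-1)$ is chosen so that $|A-B|=2d\,I$ is scalar, $A\sigma B$ is forced to be diagonal by symmetry and unitary congruence-invariance, and the two-point spectral interpolation on the reciprocal eigenvalue pair $t_1,1/t_1$ collapses $A\sigma B$ to $\kappa\,(A\nabla B)$ with $\kappa=2f(t_1)/(1+t_1)$; the scaling $m_1=m\to\infty$, $d=\beta\sqrt{m}$ then kills the inequality whenever $\kappa<1$. I checked the key computations ($\det(B-tA)=0$ giving $t_1=\frac{1-\beta}{1+\beta}$, the interpolation coefficients $c_0=c_1=f(t_1)/(1+t_1)$, and the reduction to $(1-\kappa)m_i\le d$) and they are all right. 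What each approach buys: the paper's argument is shorter once Theorems 3.7 and 4.4 of Kubo--Ando are granted, but it needs the hypothesis at singular (projection) arguments and hence the continuity extension of $\sigma$; your argument uses only the elementary transformer equality and functional calculus in dimension two, and it proves the slightly stronger statement that the hypothesis need only be assumed for positive \emph{definite} $A,B$. Your closing remark is also easily confirmed: for your matrices $AB+BA=\diag(2(m_1^2-d^2),\,2(m_2^2-d^2))$, whose second entry is negative for large $m$, so the counterexample indeed lives outside the regime of Theorem \ref{thm:cauchy inequality}.
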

\begin{proof} By \cite[Theorem 4.4]{Kubo-Ando}, the symmetric operator mean $\sigma$ is represented for any positive operators $A, B$ as
\begin{equation}\label{Kubo-Ando-4.4}
A \sigma B = \frac{\alpha}{2} (A +B) + \int_{(0,\infty)} \frac{\lambda +1}{\lambda} \{(\lambda A):B + A :(\lambda B)\} d \mu(\lambda), 
\end{equation}
where $\lambda \ge 0$ and $\mu$ is a positive measure on $(0, \infty)$ with $\alpha + \mu((0, \infty))=1$. 
If $P, Q$ are orthogonal projections such that $P \wedge Q = 0$ then by \cite[Theorem 3.7]{Kubo-Ando}, 
$$(\lambda P):Q = P : (\lambda Q) = \frac{\lambda}{\lambda + 1} P \wedge Q.$$ 
Consequently, from (\ref{Kubo-Ando-4.4}) we get
 $$P \sigma Q = \frac{\alpha}{2} (P+Q).$$
So, the inequality (\ref{HOK2}) becomes
\begin{equation}\label{4}
(1-\alpha) (P+Q) \le |P-Q|.
\end{equation}
Let us consider the following orthogonal projections (see \cite{Ando-Hiai})
$$
P = \left(
\begin{array}{cc}
1&0\\
0&0
\end{array}
\right), \quad Q = \left(
\begin{array}{cc}
\cos^2 \theta & \cos \theta \sin \theta \\
\cos \theta \sin \theta &\sin^2 \theta
\end{array}
\right).
$$
For such operators, let's compare the (1, 1)-entries of both sides of (\ref{4}) we get
$$
(1-\alpha) (1+\cos^2 \theta) \le \sin^2 \theta,
$$
or equivalent,
$$
1-\alpha \le \frac{\sin^2 \theta}{1+\cos^2 \theta}.
$$
Tending $\theta$ to zero, we get that $\alpha \ge 1$. This shows that $\mu =0$ in (\ref{Kubo-Ando-4.4}), and hence $\sigma =\nabla$.
\end{proof}

Now, we will prove the main result of this paper, that is to prove that inequality (\ref{A reverse Cauchy Ineq}) characterizes the operator monotonicity. 

We need two lemmas.
\begin{lemma}\label{lemma1}
Let $I \le A \le \sqrt{2}I$. Then there exist positive matrices $X, Y$ such that 
$$
I = X \nabla Y,\quad A = X\sharp Y + \frac{1}{2} X^{1/2}|I -X^{-1/2} YX^{-1/2}| X^{1/2}.
$$
\end{lemma}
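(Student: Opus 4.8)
The plan is to reduce the matrix statement to the scalar construction already carried out in the proof of Proposition~\ref{Prop1}, by choosing $X$ and $Y$ to be functions of $A$ so that $A$, $X$, and $Y$ are simultaneously diagonalizable; this turns the two matrix identities into scalar identities on each eigenvalue.

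First I would make the scalar solution explicit and single-valued. Setting $a=1$ in the quadratic $2x^2+2(b-2a)x+(b-a)^2=0$ from the proof of Proposition~\ref{Prop1} gives $2x^2+2(\lambda-2)x+(\lambda-1)^2=0$, whose (reduced) discriminant is $2-\lambda^2$, so a real root exists precisely when $\lambda\le\sqrt2$. For $\lambda\in[1,\sqrt2]$ I take the choice
$$
x(\lambda)=\frac{(2-\lambda)+\sqrt{2-\lambda^2}}{2},\qquad y(\lambda)=2-x(\lambda).
$$
Here $x(\lambda)>0$ because $2-\lambda\ge2-\sqrt2>0$, and $x(\lambda)\le1$ is equivalent to $\sqrt{2-\lambda^2}\le\lambda$, i.e.\ to $\lambda\ge1$; hence $0<x(\lambda)\le1\le y(\lambda)$. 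By construction $\tfrac12\bigl(x(\lambda)+y(\lambda)\bigr)=1$ and $\sqrt{x(\lambda)y(\lambda)}+\tfrac12|x(\lambda)-y(\lambda)|=\lambda$ for every $\lambda\in[1,\sqrt2]$, and $x(\cdot)$, $y(\cdot)$ are continuous there.

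Next I would pass to matrices through the functional calculus. Writing the spectral decomposition $A=\sum_i\lambda_i P_i$, with each $\lambda_i\in[1,\sqrt2]$ by hypothesis, I set $X=x(A)=\sum_i x(\lambda_i)P_i$ and $Y=y(A)=\sum_i y(\lambda_i)P_i$. Then $X,Y>0$, and $X$, $Y$, $A$ mutually commute. Since commuting positive matrices are simultaneously diagonalized, on each spectral projection one has $X\sharp Y=(XY)^{1/2}$, $X^{-1/2}YX^{-1/2}=X^{-1}Y$, and therefore
$$
X^{1/2}\,|I-X^{-1/2}YX^{-1/2}|\,X^{1/2}=X\,|I-X^{-1}Y|=|X-Y|,
$$
each expression acting as the corresponding scalar on the eigenvalue $\lambda_i$. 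Thus the identities $I=X\nabla Y$ and $A=X\sharp Y+\tfrac12 X^{1/2}|I-X^{-1/2}YX^{-1/2}|X^{1/2}$ reduce, eigenvalue by eigenvalue, to the two scalar identities established above, which completes the proof.

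I expect the only genuine subtleties to be the simplification of the geometric-mean and absolute-value expressions under commutativity, and the verification that the explicit root $x(\lambda)$ stays in $(0,1]$ across the whole interval $[1,\sqrt2]$. Once $X$ and $Y$ are taken as $x(A)$ and $y(A)$, the problem is essentially one-dimensional, so no non-commutative obstruction arises; the main conceptual point is simply the recognition that \emph{commuting} $X,Y$ suffice and that selecting them as functions of $A$ is exactly what makes the reduction to Proposition~\ref{Prop1} go through.
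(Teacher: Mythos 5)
Your proof is correct and takes essentially the same route as the paper: both arguments set $Y=2I-X$ with $X$ a function of $A$ (so that $A,X,Y$ commute and the matrix identities reduce to the scalar equation $\lambda=\sqrt{x(2-x)}+1-x$), the paper defining $X=h^{-1}(A)$ via monotonicity of $h(t)=\sqrt{2t-t^2}+1-t$ on $[0,(2-\sqrt{2})/2]$ while you solve the quadratic explicitly and take the other branch $x(\lambda)\in[(2-\sqrt{2})/2,1]$. Your branch has the minor advantage that $x(1)=1$, so $X$ remains invertible and $X^{-1/2}$ is well defined even when $1\in\sigma(A)$, whereas the paper's branch gives $x(1)=0$ there; your explicit verification of the commutative simplifications $X\sharp Y=(XY)^{1/2}$ and $X^{1/2}|I-X^{-1/2}YX^{-1/2}|X^{1/2}=|X-Y|$ is also a welcome detail the paper leaves implicit.
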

\begin{proof}
To prove Lemma, we need to show that there exists a positive matrix $X$ such that
$$
A = X\sharp (2I-X) + \frac{1}{2} X^{1/2}|I -X^{-1/2} (2I-X)X^{-1/2}| X^{1/2},
$$
or, equivalent,
$$
A = X\sharp (2I-X) + |X-I| 
$$
for the given positive matrix $A$. We can assume that $X \le I.$ And so, we should show that for any positive matrix $I \le A \le I \sqrt{2}$ we can find a matrix $0\le X \le I$ such that 
$$
A = X\sharp (2I-X) + I - X.
$$
It is easy to see that the function $h(t) = \sqrt{2t - t^2} + 1-t$ will be strictly increasing on $[0, \frac{2-\sqrt{2}}{2}]$ and take values in interval $[1, \sqrt{2}]$. So, for a such matrix $A$, we can find a matrix $X$ such that $X = h^{-1}(A)$.
\end{proof}
\begin{lemma}\label{lemma2}
Let $A \le B \le \sqrt{2}A$. Then there exist positive matrices $X, Y$ such that 
$$
A = X \nabla Y,\quad B = X\sharp Y + \frac{1}{2} X^{1/2}|I -X^{-1/2} YX^{-1/2}| X^{1/2}
$$
\end{lemma}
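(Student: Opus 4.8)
The plan is to reduce Lemma \ref{lemma2} to Lemma \ref{lemma1} by a congruence transformation. Since $A>0$ and $A\le B\le\sqrt{2}A$, conjugating by $A^{-1/2}$ preserves the order, so the matrix $C:=A^{-1/2}BA^{-1/2}$ satisfies $I\le C\le\sqrt{2}I$. Lemma \ref{lemma1} then supplies positive matrices $X_0,Y_0$ with
\[
I=X_0\nabla Y_0,\qquad C=X_0\sharp Y_0+\tfrac12 X_0^{1/2}|I-X_0^{-1/2}Y_0X_0^{-1/2}|X_0^{1/2}.
\]
I would set $X:=A^{1/2}X_0A^{1/2}$ and $Y:=A^{1/2}Y_0A^{1/2}$, both positive, and verify that these are the matrices required by the lemma.

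The identity $X\nabla Y=A$ is immediate from linearity, since $X\nabla Y=A^{1/2}(X_0\nabla Y_0)A^{1/2}=A^{1/2}IA^{1/2}=A$. The real content is that the second, nonlinear, expression transforms correctly under the congruence $Z\mapsto A^{1/2}ZA^{1/2}$. To see this I would observe that the whole right-hand side is a single perspective-type operation
\[
N(Z,W):=Z^{1/2}\,g\!\left(Z^{-1/2}WZ^{-1/2}\right)Z^{1/2},\qquad g(t):=\sqrt{t}+\tfrac12|1-t|,
\]
because $Z\sharp W=Z^{1/2}(Z^{-1/2}WZ^{-1/2})^{1/2}Z^{1/2}$ and the $|I-\cdots|$ term is exactly $Z^{1/2}|1-Z^{-1/2}WZ^{-1/2}|Z^{1/2}$. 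Thus the lemma reduces to the single claim that $N$ is \emph{congruence covariant}, i.e. $N(S^*ZS,S^*WS)=S^*N(Z,W)S$ for every invertible $S$; applying this with $S=A^{1/2}$ yields $N(X,Y)=A^{1/2}N(X_0,Y_0)A^{1/2}=A^{1/2}CA^{1/2}=B$, which is precisely the second required equation.

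Establishing the covariance of $N$ is the step I expect to be the main obstacle, because $g(t)=\sqrt{t}+\tfrac12|1-t|$ is \emph{not} operator monotone, so this does not follow from the Kubo--Ando transformer equality for connections. Instead I would prove covariance of $P_g(Z,W):=Z^{1/2}g(Z^{-1/2}WZ^{-1/2})Z^{1/2}$ for an \emph{arbitrary} continuous $g$. For a monomial $g(t)=t^n$ the square roots telescope and one computes $P_g(Z,W)=(WZ^{-1})^{n-1}W$, a word in $Z^{-1}$ and $W$ for which covariance is a one-line check; by linearity it then holds for all polynomials. To pass to a general continuous $g$ I would use that $Z^{-1/2}WZ^{-1/2}$ and $(S^*ZS)^{-1/2}(S^*WS)(S^*ZS)^{-1/2}$ are similar (both being similar to $Z^{-1}W$), hence share the \emph{same} spectrum, contained in some $[m,M]\subset(0,\infty)$. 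Approximating $g$ uniformly on $[m,M]$ by polynomials and invoking norm-continuity of the functional calculus, the covariance identity, valid for each approximating polynomial, passes to the limit on both sides simultaneously.

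Finally I would note that the special case $g(t)=\sqrt{t}$ recovers the standard congruence invariance of the geometric mean, so the argument needs no external input beyond Lemma \ref{lemma1}; and since $N(X_0,Y_0)=C$ is exactly the conclusion of that lemma, the reduction closes.
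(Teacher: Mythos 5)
Your proof is correct, but the key step is handled quite differently from the paper. Both arguments conjugate by $A^{1/2}$ and invoke Lemma \ref{lemma1} for $C=A^{-1/2}BA^{-1/2}$; the difference is in how the absolute-value term is transported through the congruence. The paper never needs covariance of the full nonlinear expression: it uses the extra information from the \emph{proof} of Lemma \ref{lemma1} that $X_0\le I\le Y_0$ and $X_0+Y_0=2I$, so that $I-X_0=\tfrac12(Y_0-X_0)\ge 0$; the troublesome term $\tfrac12 X^{1/2}\lvert I-X^{-1/2}YX^{-1/2}\rvert X^{1/2}$ then collapses to the linear quantity $\tfrac12(Y-X)=A-A^{1/2}X_0A^{1/2}$, and only the geometric-mean part requires the standard transformer equality. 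You instead prove a general congruence covariance $P_g(S^*ZS,S^*WS)=S^*P_g(Z,W)S$ of the perspective $P_g(Z,W)=Z^{1/2}g(Z^{-1/2}WZ^{-1/2})Z^{1/2}$ for arbitrary continuous $g$, via the telescoping identity $P_{t^n}(Z,W)=(WZ^{-1})^{n-1}W$ and polynomial approximation on a common spectral interval (the similarity of $Z^{-1/2}WZ^{-1/2}$ and its transform to $Z^{-1}W$ guarantees the same spectrum, so a single uniform approximation controls both sides). This is sound, and correctly flagged as not following from the Kubo--Ando transformer equality since $g(t)=\sqrt{t}+\tfrac12\lvert 1-t\rvert$ is not operator monotone. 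Your route costs an extra (if standard) approximation lemma but buys generality and uses only the \emph{statement} of Lemma \ref{lemma1}, whereas the paper's route is shorter but leans on internal details of Lemma \ref{lemma1}'s construction.
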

\begin{proof}
From the assumption we have $I \le A^{-1/2}B A^{-1/2} \le \sqrt{2} I$. From the proof of Lemma \ref{lemma1},  there exist positive matrices $X_0, Y_0$ 
$$
A^{-1/2} B A^{-1/2} = X_0 \sharp Y_0 + I - X_0, \quad X_0 \nabla Y_0 = I.
$$
Consequently,
\begin{equation}\label{12}
B = (A^{1/2}X_0A^{1/2}) \sharp (A^{1/2}Y_0A^{1/2})  + A - A^{1/2}X_0A^{1/2}.
\end{equation}
In the other hand, from the proof of Lemma \ref{lemma1} we have that $$X_0 \le I \le Y_0 \quad \hbox{and} \quad X_0 + Y_0 = 2I.$$ Hence 
\begin{equation*}
\begin{split}
&A - A^{1/2}X_0A^{1/2} \\
& = A^{1/2}(I - X_0)A^{1/2} \\
&= \frac{1}{2}A^{1/2}(Y_0 - X_0)A^{1/2} \\
&=\frac{1}{2} (A^{1/2}Y_0A^{1/2} - A^{1/2}X_0A^{1/2}) \\
& = \frac{1}{2} (A^{1/2}X_0A^{1/2})^{1/2}((A^{1/2}X_0A^{1/2})^{-1/2}(A^{1/2}Y_0A^{1/2})(A^{1/2}X_0A^{1/2})^{-1/2} -I)  \\ 
& \cdot (A^{1/2}X_0A^{1/2})^{1/2}\\
&=  \frac{1}{2} (A^{1/2}X_0A^{1/2})^{1/2}|(A^{1/2}X_0A^{1/2})^{-1/2}(A^{1/2}Y_0A^{1/2})(A^{1/2}X_0A^{1/2})^{-1/2} -I| \\
&\cdot (A^{1/2}X_0A^{1/2})^{1/2}.
\end{split}
\end{equation*}
\end{proof}

\begin{theorem}
 Let $f$ be a nonnegative function on $[0, \infty)$, and let the following inequality
\begin{equation*}
f(X \nabla Y) \le   f(X\sharp Y + \frac{1}{2} X^{1/2}|I -X^{-1/2} YX^{-1/2}| X^{1/2})
\end{equation*}
holds whenever positive definite matrices $X, Y$. Then the function $f$ is operator monotone on $[0, \infty).$
\end{theorem}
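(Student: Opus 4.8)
The plan is to reproduce the argument of Proposition \ref{Prop1} at the matrix level, with Lemma \ref{lemma2} playing the role of the elementary solvability fact used there. Since it suffices to verify $f(A) \le f(B)$ for positive definite $A \le B$ (the passage to positive semidefinite arguments being routine), I would first dispose of the base case $A \le B \le \sqrt{2}\,A$. Here Lemma \ref{lemma2} furnishes positive matrices $X, Y$ with $A = X \nabla Y$ and $B = X \sharp Y + \frac{1}{2} X^{1/2}|I - X^{-1/2}YX^{-1/2}|X^{1/2}$, so the hypothesis applied to this very pair gives
\begin{equation*}
f(A) = f(X \nabla Y) \le f\Big(X \sharp Y + \tfrac{1}{2} X^{1/2}|I - X^{-1/2}YX^{-1/2}|X^{1/2}\Big) = f(B).
\end{equation*}

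For a general pair $A \le B$ I would bridge the gap by a linear chain, exactly as the scalars $a_0 \le \cdots \le a_m$ are inserted in Proposition \ref{Prop1}. Put $A_i = A + \tfrac{i}{m}(B - A)$ for $i = 0, 1, \dots, m$, so that $A_0 = A$, $A_m = B$, and $A_i \le A_{i+1}$ because $B - A \ge 0$; moreover each $A_i \ge A > 0$ is positive definite. The only thing to arrange is that consecutive terms fall under the base case, i.e. $A_{i+1} \le \sqrt{2}\,A_i$. Since $A$ is invertible there is $\kappa \ge 0$ with $B - A \le \kappa A$ (take $\kappa = \|A^{-1/2}(B-A)A^{-1/2}\|$), and using $A_i \ge A$ one finds
\begin{equation*}
\sqrt{2}\,A_i - A_{i+1} = (\sqrt{2}-1)A_i - \tfrac{1}{m}(B-A) \ge (\sqrt{2}-1)A - \tfrac{\kappa}{m}A = \Big(\sqrt{2}-1-\tfrac{\kappa}{m}\Big)A,
\end{equation*}
which is positive semidefinite once $m \ge \kappa/(\sqrt{2}-1)$. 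Fixing such an $m$, the base case applies to every pair $(A_i, A_{i+1})$, and chaining yields $f(A) = f(A_0) \le f(A_1) \le \cdots \le f(A_m) = f(B)$.

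The delicate point, and the one I expect to cost the most care, is that the hypothesis is granted only for \emph{positive definite} $X, Y$, so one must check that the matrices produced by Lemma \ref{lemma2} are nonsingular. Tracing the construction, $Y = A_i^{1/2}(2I - X_0)A_i^{1/2}$ with $X_0 = h^{-1}(A_i^{-1/2}A_{i+1}A_i^{-1/2})$ having spectrum in $[0, \tfrac{2-\sqrt{2}}{2}]$; hence $2I - X_0 \ge (1+\tfrac{\sqrt{2}}{2})I > 0$ and $Y$ is always positive definite, but $X = A_i^{1/2}X_0 A_i^{1/2}$ is positive definite precisely when $A_{i+1} > A_i$, that is when $B - A > 0$. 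I would therefore first establish $f(A) \le f(B)$ for strict comparisons $A < B$, where the interpolation gives $A_i < A_{i+1}$ throughout and the above goes through unchanged, and then recover the nonstrict case $A \le B$ by a limiting argument (comparing against strictly larger perturbations and passing to the limit). This boundary passage is the genuinely technical step, whereas the chain construction and the choice of $m$ are the conceptual heart that turns the local base case into global operator monotonicity.
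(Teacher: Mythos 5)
Your argument is essentially the paper's own proof: the base case $A \le B \le \sqrt{2}\,A$ is handled exactly as in the paper by invoking Lemma \ref{lemma2}, and your explicit linear chain $A_i = A + \frac{i}{m}(B-A)$ with $m \ge \kappa/(\sqrt{2}-1)$ is simply a concrete realization of the chain $A^{1/2}Z_iA^{1/2}$ that the paper asserts without construction. The one place you go beyond the paper is the observation that $X = A^{1/2}\,h^{-1}(A^{-1/2}BA^{-1/2})\,A^{1/2}$ is singular whenever $B-A$ is, so the hypothesis (stated only for positive definite $X, Y$) cannot be applied verbatim; this is a genuine subtlety that the paper silently passes over. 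Be aware, however, that your proposed repair by perturbation is not free of cost: $f$ is only assumed nonnegative, so $f(B+\epsilon I)\to f(B)$ is not automatic, and the limiting step would need either an added continuity assumption on $f$ or a separate argument to close the non-strict case.
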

\begin{proof}
Let $0 \le A \le B \le A \sqrt{2}.$ From Lemma \ref{lemma2}, there exist positive matrices $X, Y$ such that  
$$
A = X \nabla Y,\quad B = X\sharp Y + \frac{1}{2} X^{1/2}|I -X^{-1/2} YX^{-1/2}| X^{1/2}. 
$$
On account of the assumption, we have
$$
f(A) = f(X \nabla Y) \le f(X\sharp Y + \frac{1}{2} X^{1/2}|I -X^{-1/2} YX^{-1/2}| X^{1/2}) = f(B). 
$$
In general, let $A \le B$, hence $I \le A^{-1/2}BA^{-1/2}.$ It is obvious that we can find matrices $I=Z_1 \le Z_2 \le \cdots \le Z_k = A^{-1/2}BA^{-1/2}$ such that 
$$Z_i \le Z_{i+1} \le Z_i \sqrt{2}, \quad i=0, \cdots, n-1.$$ Consequently,
$$
0 \le A \le A^{1/2}Z_2 A^{1/2} \le \cdots \le A^{1/2}Z_n A^{1/2} = B
$$
and 
$$
A^{1/2}Z_i A^{1/2}  \le A^{1/2}Z_{i+1} A^{1/2} \le \sqrt{2}A^{1/2}Z_i A^{1/2}.
$$
From the above arguments, we have

$$f(A) \le f(A^{1/2}Z_2A^{1/2}) \le \cdots \le f(A^{1/2}Z_{n} A^{1/2}) =f(B).$$
That means, the function $f$ is operator monotone.
\end{proof}

{\it Acknowledgement.} The author would like to thank Professor Fumio Hiai for useful comments on this work. This research is funded by Vietnam National Foundation for Science and Technology Development (NAFOSTED) under grant number 101.04-2014.40.

\end{document}